\newtheorem{teor}{Theorem}
\newtheorem{lemma}{Lemma}
\DeclareMathOperator{\PSL}{PSL}
\DeclareMathOperator{\PGL}{PGL}
\DeclareMathOperator{\ASL}{ASL}
\DeclareMathOperator{\SL}{SL}
\DeclareMathOperator{\GL}{GL}
\DeclareMathOperator{\Aut}{Aut}
\begin{document}

\title{Finite groups with $6$ or $7$ automorphism orbits}
\date{}
\author{Alex Carrazedo Dantas}
\address[Alex Carrazedo Dantas]{Universidade Tecnol\'ogica Federal do Paran\'a, Campus de Guarapuava, Paran\'a 85053-525, Brazil}
\email{alexcdan@gmail.com}
\author{Martino Garonzi}
\address[Martino Garonzi]{Departamento de Matem\'atica, Universidade de Bras%
\'{\i}lia, Campus Universit\'{a}rio Darcy Ribeiro, Bras\'{\i}lia - DF
70910-900, Brazil}
\email{mgaronzi@gmail.com}
\thanks{MG was supported by FEMAT-Funda\c{c}\~{a}o de Estudos em Ci\^{e}ncias Matem\'aticas Proc. 037/2016}
\author{Raimundo Bastos}
\address[Raimundo Bastos]{Departamento de Matem\'atica, Universidade de Bras%
\'{\i}lia, Campus Universit\'{a}rio Darcy Ribeiro, Bras\'{\i}lia - DF
70910-900, Brazil}
\email{bastos@mat.unb.br}

\begin{abstract}
Let $G$ be a group. The orbits of the natural action of $\Aut(G)$ on $G$ are called ``automorphism orbits'' of $G$, and the number of automorphism orbits of $G$ is denoted by $\omega(G)$. In this paper the finite non-solvable groups $G$ with $\omega(G) \leq 6$ are classified - this solves a problem posed by Markus Stroppel - and it is proved that there are infinitely many finite non-solvable groups $G$ with $\omega(G)=7$. Moreover it is proved that for a given number $n$ there are only finitely many finite groups $G$ without non-trivial abelian normal subgroups and such that $\omega(G) \leq n$, generalizing a result of Kohl.
\end{abstract}

\maketitle

\section{Introduction}

For $G$ a group denote by $\omega(G)$ the number of ``automorphism orbits'' of $G$, that is, the number of orbits of the natural action of $\Aut(G)$ on $G$. Two elements in the same such orbit are also called ``$\Aut(G)$-conjugate''. Observe that automorphism orbits are unions of conjugacy classes and hence they give an example of \textit{fusion} in the holomorph $\Aut(G) \ltimes G$, a well-known concept established in the literature (see for example \cite[Chapter 7]{gorenstein}). It is interesting to ask what can we say about $G$ only knowing $\omega(G)$. It is obvious that $\omega(G)=1$ if and only if $G = \{1\}$, and it is well-known that if $G$ is a finite group then $\omega(G)=2$ if and only if $G$ is elementary abelian (this is false in the infinite case: a construction due to Higman, Neumann and Neumann shows that there is an infinite non-abelian simple group $S$ with $\omega(S)=2$). Laffey and MacHale in \cite{lm} showed that the alternating group $\mbox{A}_5$ (which has $\omega(\mbox{A}_5)=4$) is the only finite non-solvable group $G$ with $\omega(G) \leq 4$, and Zhang in \cite{zhang} classified the finite groups in which any two elements of the same order lie in the same automorphism orbit. Markus Stroppel in \cite{stroppel2} showed that the only finite non-abelian simple groups $G$ with $\omega(G) \leq 5$ are the groups $\PSL(2,\mathbb{F}_q)$ with $q \in \{4,7,8,9\}$, and in \cite{stroppel} and \cite{stroppel2} he investigated the properties of the locally compact topological groups $G$ such that $\omega(G)$ (the number of topological automorphism orbits) is finite.

\ 

In \cite{stroppel} Stroppel proposed the following problem (Problem 2.5): classify all the finite non-solvable groups $G$ with $\omega(G) \leq 6$. In this paper we solve Stroppel's problem showing, in Section \ref{o6}, that if $G$ is a finite non-solvable group with $\omega(G) \leq 6$ then $G$ is isomorphic to one of $\PSL(2,\mathbb{F}_q)$ with $q \in \{4,7,8,9\}$, $\PSL(3,\mathbb{F}_4)$ or $\ASL(2,\mathbb{F}_4)$. Here $\ASL(2,\mathbb{F}_4)$ is the affine group $\SL(2,\mathbb{F}_4) \ltimes \mathbb{F}_4^2$ where $\SL(2,\mathbb{F}_4)$ acts naturally on $\mathbb{F}_4^2$. In this proof the only result we use that depends on the classification of finite simple groups is Kohl's classification of simple groups with at most six automorphism orbits \cite{kohl}.

\ 

By a theorem of Landau \cite{landau}, for a given number $n$ there are only finitely many finite groups with at most $n$ conjugacy classes. The case of automorphism orbits is different, for instance elementary abelian groups all have only two automorphism orbits. However by a result of Kohl \cite{kohl}, for a given number $n$ there are only finitely many finite non-abelian simple groups $S$ with $\omega(S) \leq n$. In Section \ref{genkohl} we generalize this result to the family of all finite groups without non-trivial abelian normal subgroups.

\ 

The results presented so far are complemented in Section \ref{o7} where we exhibit infinitely many finite non-solvable groups $G$ with $\omega(G)=7$. They are the semidirect products (with natural componentwise action) $\SL(2,\mathbb{F}_4) \ltimes V^m$ where $V = \mathbb{F}_4^2$ and $m \geq 2$ (the case $m=1$ is $\ASL(2,\mathbb{F}_4)$). Actually we prove the more general result $$\omega(\SL(2,F) \ltimes \mbox{M}(2 \times m,F)) = 3+\omega(\SL(2,F))$$ where $\mbox{M}(2 \times m,F)$ is the $F$-vector space consisting of all the $2 \times m$ matrices with coefficients in $F$, $m \geq 2$, $F$ is any field of characteristic $2$ with more than $2$ elements and $\SL(2,F)$ acts on $\mbox{M}(2 \times m,F)$ by multiplication from the left.

\section{Finite non-solvable groups with $\omega(G) \leq 6$} \label{o6}

In this section all considered groups are assumed to be finite. We solve Problem 2.5 of \cite{stroppel}. The following is the main result of this section.

\begin{teor} \label{main}
Let $G$ be a finite non-solvable group with $\omega(G) \leq 6$. Then $G$ is isomorphic to one of $\PSL(2,\mathbb{F}_4)$, $\PSL(2,\mathbb{F}_7)$, $\PSL(2,\mathbb{F}_8)$, $\PSL(2,\mathbb{F}_9)$, $\PSL(3,\mathbb{F}_4)$, $\ASL(2,\mathbb{F}_4)$. Moreover $\omega(\PSL(2,\mathbb{F}_4)) = 4$, $\omega(\PSL(2,\mathbb{F}_7)) = 5$, $\omega(\PSL(2,\mathbb{F}_8)) = 5$, $\omega(\PSL(2,\mathbb{F}_9)) = 5$, $\omega(\PSL(3,\mathbb{F}_4)) = 6$, $\omega(\ASL(2,\mathbb{F}_4)) = 6$.
\end{teor}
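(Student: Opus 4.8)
The plan is to separate the classification from the computation of the individual orbit numbers, and within the classification to reduce everything to Kohl's theorem on simple groups by exploiting the behaviour of $\omega$ under characteristic subgroups. First I would record two monotonicity properties: if $N$ is a characteristic subgroup of $G$, then the restriction map $\Aut(G) \to \Aut(N)$ and the induced map $\Aut(G) \to \Aut(G/N)$ show that $\omega(N) \le \omega(G)$ and $\omega(G/N) \le \omega(G)$; moreover, since $\Aut(G)$ preserves element orders, $\omega(G)$ is bounded below by the number of distinct orders of elements of $G$. These crude inequalities already do much of the work: a product of two or more non-abelian simple groups has at least seven distinct element orders (for instance $A_5 \times A_5$ realises the orders $1,2,3,5,6,10,15$), so the socle of $G$ cannot be such a product.

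Next I would treat the case in which $G$ has no non-trivial abelian normal subgroup. Then $\mathrm{Soc}(G)$ is a product of non-abelian simple groups, hence by the previous paragraph a single non-abelian simple group $S$, and $G$ is almost simple with $S = \mathrm{Soc}(G)$ characteristic. Since $\omega(S) \le \omega(G) \le 6$, Kohl's classification restricts $S$ to the finite list $\PSL(2,\mathbb{F}_q)$ with $q \in \{4,7,8,9\}$ and $\PSL(3,\mathbb{F}_4)$. For each such $S$ this leaves only finitely many overgroups $S \le G \le \Aut(S)$, and I would eliminate every proper one by exhibiting a seventh orbit, typically an extra element order or an unfused outer class (for instance $S_5 \ge A_5$ has the six element orders $1,2,3,4,5,6$ but two orbits of involutions, hence at least seven automorphism orbits). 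This yields exactly the five simple groups in the statement.

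The real work lies in the case where $G$ has a non-trivial abelian normal subgroup. Passing to the solvable radical $R \ne 1$, the quotient $G/R$ is non-solvable with trivial solvable radical, so the previous step identifies $G/R$ as one of the five simple groups with $\omega(G/R) \le 6$. One then shows, again by orbit counting, that $R$ must be a single minimal normal elementary abelian subgroup $V$ (a larger or non-elementary radical forces too many orbits), that $V$ is a faithful irreducible module on which $G/V$ acts transitively on $V \setminus \{0\}$ so that $V$ contributes only two orbits, and that the extension is split; the numerology $\omega(V)$ contributing $2$ orbits and $\omega(G/V) = \omega(A_5) = 4$ then squeezes $G$ to $\SL(2,\mathbb{F}_4) \ltimes \mathbb{F}_4^2 = \ASL(2,\mathbb{F}_4)$. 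The main obstacle is precisely this last reduction: controlling how many orbits the elements outside $V$ are forced to form, and thereby ruling out all other modules, primes, and both the non-split and the higher-dimensional extensions, under the uniform bound of six.

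Finally, for the ``moreover'' part I would compute each value directly. For the five simple groups the numbers follow from their conjugacy classes together with the fusion induced by $\Aut(S)$, equivalently from the computations of Stroppel and Kohl; for $\ASL(2,\mathbb{F}_4)$ I would list the conjugacy classes of the semidirect product, determine their fusion under $\Aut(\ASL(2,\mathbb{F}_4))$, and check that they collapse to exactly six automorphism orbits, two lying inside $V$ and four meeting $G \setminus V$.
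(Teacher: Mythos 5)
Your first two paragraphs are sound in outline and close to the paper's route: the Burnside-based count of element orders rules out a socle with two or more non-abelian simple factors, and Kohl's theorem plus a finite case check handles the almost simple groups. Two caveats there. First, your monotonicity inequalities $\omega(N) \le \omega(G)$, $\omega(G/N) \le \omega(G)$ are strictly weaker than the lemma the paper actually relies on, namely $\omega(G) \ge \omega(N)+\omega(G/N)-1$, with the addendum that equality forces every non-trivial coset of $N$ to be contained in a single automorphism orbit. With mere monotonicity you cannot dismiss proper overgroups of $\PSL(3,\mathbb{F}_4)$ out of hand, since $\omega(\PSL(3,\mathbb{F}_4))=6$ is compatible with $\omega(G)\le 6$; you would be committed to checking all intermediate subgroups of $\Aut(\PSL(3,\mathbb{F}_4))$, whose outer part is $2\times S_3$ of order $12$. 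The additive bound kills all of these at once, because $S<G$ forces $\omega(S)\le 5$ and hence $S\cong\PSL(2,\mathbb{F}_q)$, $q\in\{4,7,8,9\}$, leaving only a handful of overgroups to inspect.

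The genuine gap is your third paragraph, i.e.\ exactly the case you yourself flag as ``the main obstacle.'' Everything after ``One then shows, again by orbit counting'' is a list of desired conclusions, not a proof, and each item needs a substantial argument that is absent. (a) Ruling out odd characteristic for the abelian radical: in the paper this requires coprime-action fixed-point results to produce elements of order $2p$, an application of Aschbacher's 36.2 to eliminate $p\neq 3$, and, for $p=3$, identifying the module as the fully deleted permutation module of $\mbox{A}_5$ on $\mathbb{F}_3^4$ and exhibiting a vector centralized by no Sylow $3$-subgroup. (b) Reducing the quotient from the five possible groups to $\mbox{A}_5$: monotonicity gives nothing here; the paper needs the equality clause of the additive lemma together with a dihedral-normalizer (Frattini-type) argument that manufactures a coset of $V$ containing both an involution and an element of order $4$, contradicting the coset-fusion forced by equality. (c) Pinning down $|V|=16$ and the isomorphism type: the paper counts fixed points of the five Sylow $2$-subgroups to force $\dim V\le 4$, uses $5\mid |G/V|$ and $G/V\hookrightarrow\GL(4,\mathbb{F}_2)$ to force $\dim V=4$, and then identifies $G$ among the two perfect groups of order $960$ via Holt--Plesken --- which is also what disposes of your non-split and higher-dimensional possibilities; you propose no tool for either. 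Finally, your claim that $G/V$ acts transitively on $V\setminus\{0\}$ is stronger than what the counting yields (one only gets transitivity of $\Aut(G)$ on $V\setminus\{0\}$), and the paper is careful to use only the latter. So the proposal organizes the problem correctly but leaves its hardest two-thirds unproved.
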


For this we need some preparatory results. A crucial observation, that we will use many times, is that two elements that lie in the same automorphism orbit have the same order; in particular $\omega(G) \geq m$, where $m$ is the number of element orders of $G$, $m = |\{o(g)\ :\ g \in G\}|$. Part of the following lemma is \cite[Lemma 3.2]{stroppel}.

\begin{lemma} \label{bound}
Let $G$ be a group and $N$ a characteristic subgroup of $G$. Then $\omega(G) \geq \omega(N) + \omega(G/N) - 1$. Moreover if equality holds then every non-trivial coset $xN$ is contained in an automorphism orbit of $G$.
\end{lemma}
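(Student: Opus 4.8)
The plan is to use the characteristic subgroup $N$ to set up a surjective map between automorphism orbits. The key observation is that since $N$ is characteristic in $G$, every automorphism $\alpha \in \Aut(G)$ restricts to an automorphism $\alpha|_N \in \Aut(N)$ and descends to an automorphism $\bar{\alpha} \in \Aut(G/N)$ of the quotient. This gives two group homomorphisms $\Aut(G) \to \Aut(N)$ and $\Aut(G) \to \Aut(G/N)$, which let me compare the three orbit structures.

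For the inequality $\omega(G) \geq \omega(N) + \omega(G/N) - 1$, **first I would** count the orbits of the $\Aut(G)$-action on $G$ according to whether or not they meet $N$. Consider the natural projection $\pi \colon G \to G/N$. The orbits of $G$ that lie inside $N$ map, via the restriction homomorphism $\Aut(G) \to \Aut(N)$, onto a refinement of the $\Aut(N)$-orbits: since each $\Aut(G)$-orbit inside $N$ is contained in a single $\Aut(N)$-orbit (as $\alpha|_N$ ranges over a subgroup of $\Aut(N)$), there are at least $\omega(N)$ distinct $\Aut(G)$-orbits meeting $N$. Meanwhile, an $\Aut(G)$-orbit $\mathcal{O}$ in $G$ is sent by $\pi$ into a single $\Aut(G/N)$-orbit of $G/N$, because $\pi(\alpha(g)) = \bar{\alpha}(\pi(g))$; hence $\pi$ induces a well-defined surjection from the set of $\Aut(G)$-orbits of $G$ onto the set of $\Aut(G/N)$-orbits of $G/N$, giving at least $\omega(G/N)$ orbits distinguished by their image in the quotient. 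The trivial orbit $\{1\}$ is the only one counted in both tallies (it lies in $N$ and maps to the identity coset), so combining these two counts and subtracting the double-counted trivial orbit yields $\omega(G) \geq \omega(N) + \omega(G/N) - 1$.

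For the equality statement, **the idea is** to analyze when no slack occurs in the above counting. Equality forces the surjection onto $\Aut(G/N)$-orbits to be a bijection on the non-trivial orbits and simultaneously forces the $\Aut(G)$-orbits inside $N$ to coincide exactly with $\Aut(N)$-orbits. In particular, if $xN$ is a non-trivial coset (i.e. $x \notin N$), then its image $\pi(x)$ is a non-trivial element of $G/N$; the preimage $\pi^{-1}(\text{orbit of } \pi(x))$ intersected with $G$ must then be a single $\Aut(G)$-orbit, and since $xN = \pi^{-1}(\pi(x))$ is contained in the $\Aut(G)$-orbit of $x$ by the bijectivity, the whole coset $xN$ lies in one automorphism orbit of $G$.

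**The main obstacle** I anticipate is making the double-counting rigorous without overcounting or undercounting — specifically, verifying that every $\Aut(G)$-orbit of $G$ is correctly classified as either meeting $N$ or being separated by its image in $G/N$, and checking that the only overlap between the two families is the trivial orbit. The subtlety is that orbits meeting $N$ all map to the trivial element only if they consist of the identity; a non-trivial orbit inside $N$ maps to the trivial coset in $G/N$ but is \emph{not} the trivial orbit of $G$, so I must be careful that the surjection onto $\Aut(G/N)$-orbits collapses all of $N$'s orbits to the single identity orbit of $G/N$, which is exactly why the correction term is $-1$ and not larger. Getting this bookkeeping precise, and then extracting the coset conclusion cleanly from the equality case, is where the real care is needed.
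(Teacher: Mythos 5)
Your proposal is correct and takes essentially the same approach as the paper: both arguments split the $\Aut(G)$-orbits of $G$ into those contained in $N$ (at least $\omega(N)$ of them, via restriction to $\Aut(N)$) and those disjoint from $N$ (at least $\omega(G/N)-1$ of them, via the induced action on $G/N$), with the paper phrasing this through explicit orbit representatives $a_i$ and $b_jN$ while you phrase it through a surjection of orbit sets. Your equality argument — tightness forces the surjection onto non-trivial quotient orbits to be a bijection, so the preimage of each such orbit, and hence each non-trivial coset, lies in a single $\Aut(G)$-orbit — is the same idea as the paper's element-wise computation showing $x$ and $xn$ are $\Aut(G)$-conjugate.
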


\begin{proof}
Observe that $\Aut(G)$ acts naturally on $N$ and $G/N$ by automorphism. Let $a_1=1,a_2,\ldots,a_r$ be representatives of the $\Aut(G)$-orbits contained in $N$ and let $b_1=1,b_2,\ldots,b_s$ be elements of $G$ such that $b_1N,b_2N,\ldots,b_sN$ are representatives of the $\Aut(G)$-orbits of $G/N$. Observe that $r \geq \omega(N)$ and $s \geq \omega(G/N)$. It is clear that $a_1,\ldots,a_r,b_2,\ldots,b_s$ lie in distinct automorphism orbits of $G$, in particular $\omega(G) \geq r+s-1$. The stated inequality follows. Suppose equality holds, so that $\{a_1, a_2, \ldots, a_r, b_2, \ldots, b_s\}$ is a complete set of representatives of the automorphism orbits of $G$. If $x \in G-N$ there is an automorphism $\varphi$ of $G$ such that $x^{\varphi} = b_i$ for some $i \in \{2,\ldots,s\}$. Let $n \in N$, and let $\psi \in \Aut(G)$, $j \geq 2$ be such that $(xn)^{\psi} = b_j$ (such $j$ exists because $xn \not \in N$ being $x \not \in N$). We have $b_j^{\psi^{-1} \varphi} N = (xn)^{\varphi} N = x^{\varphi} n^{\varphi} N = x^{\varphi} N = b_i N$, hence $b_i=b_j$ therefore $x$ and $xn$ are $\Aut(G)$-conjugate.
\end{proof}

\begin{lemma} \label{kohl}
Let $S$ be a non-abelian simple group and let $G$ be an almost-simple group with $S \leq G \leq \Aut(S)$. If $\omega(G) \leq 6$ then $G=S$ and $S$ is isomorphic to $\PSL(2,\mathbb{F}_q)$ for $q \in \{4,7,8,9\}$ or to $\PSL(3,\mathbb{F}_4)$.
\end{lemma}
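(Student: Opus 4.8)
The plan is to exploit the fact that $S = \mathrm{soc}(G)$ is the unique minimal normal subgroup of the almost-simple group $G$, and hence is characteristic in $G$. This has two consequences that I will use. First, Lemma \ref{bound} applies with $N = S$, giving $\omega(G) \geq \omega(S) + \omega(G/S) - 1$. Second, and crucially for the borderline cases below, every automorphism of $G$ stabilizes $S$ and therefore also $G \setminus S$; consequently each $\Aut(G)$-orbit is contained entirely in $S$ or entirely in $G \setminus S$, so an inner element and an outer element of the same order can never be $\Aut(G)$-conjugate.

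If $G = S$ then $\omega(S) \leq 6$, and Kohl's classification \cite{kohl} of the finite non-abelian simple groups with at most six automorphism orbits yields at once that $S$ is one of $\PSL(2,\mathbb{F}_q)$ with $q \in \{4,7,8,9\}$ or $\PSL(3,\mathbb{F}_4)$, as required. So assume $G \neq S$. Then $G/S$ is a non-trivial subgroup of $\mathrm{Out}(S)$, whence $\omega(G/S) \geq 2$, and the displayed inequality forces $\omega(S) \leq 5$. Reading off the $\omega \leq 5$ part of Kohl's list (equivalently, Stroppel's theorem), $S \cong \PSL(2,\mathbb{F}_q)$ with $q \in \{4,7,8,9\}$. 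Since $\mathrm{Out}(S)$ is small in each of these cases, there are only finitely many groups $G$ with $S < G \leq \Aut(S)$ to rule out, namely $S_5$ for $q=4$; $\PGL(2,7)$ for $q=7$; $\mathrm{P\Gamma L}(2,8) = \PSL(2,8).3$ for $q=8$; and $S_6$, $\PGL(2,9)$, $M_{10}$, $\mathrm{P\Gamma L}(2,9)$ for $q=9$. For each such $G$ I will show $\omega(G) \geq 7$, contradicting $\omega(G) \leq 6$.

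The verification splits according to the number of distinct element orders of $G$, recalling that $\omega(G)$ is at least that number. For $\PGL(2,7)$, $\PGL(2,9)$ and $\mathrm{P\Gamma L}(2,9)$ the torus-and-unipotent structure already exhibits at least seven distinct element orders (for instance $\{1,2,3,4,6,7,8\}$ for $\PGL(2,7)$), so $\omega(G) \geq 7$ outright. In the remaining cases $G$ has exactly six distinct element orders, and here the characteristic-subgroup observation supplies the extra orbit: I exhibit an order $d$ realized both by an element of $S$ and by an element of $G \setminus S$, so that the elements of order $d$ meet at least two $\Aut(G)$-orbits and $\omega(G) \geq 6 + 1 = 7$. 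Concretely one takes $d = 2$ for $S_5$ and $S_6$ (a double transposition lies in the socle while a transposition does not), $d = 3$ for $\mathrm{P\Gamma L}(2,8)$ (the field automorphism has order $3$ and lies outside $S$, while $S$ contains elements of order $3$), and $d = 4$ for $M_{10}$.

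The main obstacle is precisely these borderline extensions: for them Lemma \ref{bound} alone yields only $\omega(G) \geq 6$, so the inequality does not close the gap. It is exactly the fact recorded in the first paragraph—that inner and outer elements of equal order cannot be fused by an automorphism of $G$—that produces the seventh orbit and rules out each proper almost-simple overgroup.
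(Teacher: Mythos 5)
Your proof is correct, and its skeleton coincides with the paper's: apply Lemma \ref{bound} to the characteristic subgroup $S = \mbox{soc}(G)$, invoke Kohl's classification to get the $\omega(S) \leq 6$ list when $G = S$ and the $\omega(S)\leq 5$ list (i.e. $\PSL(2,\mathbb{F}_q)$, $q \in \{4,7,8,9\}$) when $G > S$, and then rule out the finitely many proper almost-simple overgroups $S_5$, $\PGL(2,7)$, $\mathrm{P\Gamma L}(2,8)$, $\PGL(2,9)$, $S_6$, $M_{10}$, $\mathrm{P\Gamma L}(2,9)$. Where you genuinely differ is the exclusion step. The paper simply quotes exact orbit counts --- $\omega(\Aut(S)) \in \{7,9,11\}$ for $q \in \{4,7,8\}$ (using that $[\Aut(S):S]$ is prime there, so $G = \Aut(S)$ is forced), and $\omega(\PGL(2,9))=8$, $\omega(S_6)=8$, $\omega(M_{10})=7$, $\omega(\Aut(A_6))=13$ for $q=9$ --- without deriving them. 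You instead derive the weaker but sufficient bound $\omega(G) \geq 7$ from two elementary ingredients: the number of distinct element orders, and the observation that, $S$ being characteristic, no automorphism of $G$ can fuse an element of $S$ with an element of $G \setminus S$; this supplies the seventh orbit exactly in the borderline cases where only six element orders exist ($d=2$ for $S_5$ and $S_6$, $d=3$ for $\mathrm{P\Gamma L}(2,8)$, $d=4$ for $M_{10}$, where indeed one class of order-$4$ elements lies inside $A_6$ and another outside). Your supporting facts check out ($\PGL(2,7)$ has element orders $\{1,2,3,4,6,7,8\}$, $\PGL(2,9)$ has $\{1,2,3,4,5,8,10\}$, $\mathrm{P\Gamma L}(2,9) \supseteq \PGL(2,9)$, and the $M_{10}$ claim agrees with its class data), so the argument is complete. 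What your version buys is self-containedness: it needs only element orders and knowledge of which classes meet the socle, rather than precomputed values of $\omega$ for seven groups; what the paper's version buys is brevity, since citing those values collapses the entire case analysis into two sentences.
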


\begin{proof}
By Lemma \ref{bound} we have $\omega(S) \leq 6$ and by Kohl's classification \cite[Table 5]{kohl} we have that $S$ is isomorphic to one of the groups listed. Suppose $G \neq S$ by contradiction. By Lemma \ref{bound} we have $\omega(S) \leq 5$ so $S \cong \PSL(2,\mathbb{F}_q)$ with $q \in \{4,7,8,9\}$. If $q \neq 9$ then $S$ has prime index in $\Aut(S)$, and $G \cong A = \Aut(S)$ follows. However, we have $\omega(\Aut(S)) \in \{7,9,11\}$ in this case. There remains the case $q=9$; here $S = \PSL(2,\mathbb{F}_9) \cong \mbox{A}_6$, and $\Aut(S)/S$ is elementary abelian of order $4$. We have $\omega(\Aut(S))=13$. The intermediate groups are isomorphic to $\PGL(2,\mathbb{F}_9)$, $\mbox{S}_6$, and the (non-simple) Mathieu group $\mbox{M}_{10}$, with $\omega(\PGL(2,\mathbb{F}_9))=8$, $\omega(\mbox{S}_6)=8$, $\omega(\mbox{M}_{10})=7$. This proves that $G=S$.
\end{proof}

We now proceed to the proof of Theorem \ref{main}. Assume the result does not hold and let $G$ be a counterexample of minimal order. By hypothesis $G$ is non-solvable and $\omega(G) \leq 6$.

\subsection{} $Z(G)=\{1\}$.

\ 

Suppose $Z(G) \neq \{1\}$ by contradiction, and let $p$ be a prime divisor of $|Z(G)|$. Since $G$ is not solvable, $G/Z(G)$ is not solvable, so by Burnside's Theorem \cite[Theorem 3.3 of Chapter 4]{gorenstein} $|G/Z(G)|$ has at least three distinct prime divisors $p_1$, $p_2$, $p_3$. We choose $p_1=p$ if $p$ divides $|G/Z(G)|$. Now $G$ has elements of order $1$, $pp_2$, $pp_3$, $p_2$ and $p_3$. If $p_1 \neq p$ then we have elements of order $p$ and $p_1$, which exceeds $\omega(G) \leq 6$, so $p=p_1$ and $G$ has at least two automorphism orbits of $p$-elements of order at least $p$, one in $Z(G)$ and the other in $G-Z(G)$, a contradiction. Therefore $Z(G)=\{1\}$. \hfill $\qed$

\subsection{} \label{chm} Let $N$ be a minimal characteristic subgroup of $G$. Then $N$ is elementary abelian, $C_G(N)=N$ and $G/N \cong \PSL(2,\mathbb{F}_q)$ with $q \in \{4,7,8,9\}$.

\ 

If we already know that $N$ is elementary abelian then we deduce that $G/N$ is non-solvable, $\omega(G/N) \leq 5$ by Lemma \ref{bound} and the minimality of $G$ as a counterexample implies that $G/N$ is isomorphic to one of the groups listed. In particular $G/N$ is simple so $C_G(N)=N$. Since $N$ is characteristically simple, all we need to show is that $N$ is abelian. Assume $N$ is non-abelian by contradiction. If $N$ is not simple then $N$ contains non-abelian simple normal subgroups $N_1$ and $N_2$ that are isomorphic and which commute. By Burnside's Theorem \cite[Theorem 3.3 of Chapter 4]{gorenstein} $|N_1|$ has at least three prime divisors $p_1$, $p_2$ and $p_3$. Thus $G$ has elements of orders $1$, $p_1$, $p_2$, $p_3$, $p_1p_2$, $p_1p_3$, $p_2p_3$ so $\omega(G) \geq 7$, a contradiction. So $N$ is a non-abelian simple group. If $NC_G(N) \neq G$ then $\omega(NC_G(N)) \leq 5$ by Lemma \ref{bound} and so $C_G(N) = \{1\}$ by minimality of $G$ as a counterexample. But then $N \leq G \leq \Aut(N)$ and we obtain a contradiction via Lemma \ref{kohl}. Hence $G = NC_G(N) \cong N \times C_G(N)$ and $C_G(N) \neq \{1\}$. Now taking $x \in C_G(N)$ of prime order $p$, and noting that $|N|$ has at least three prime divisors $p_1$, $p_2$ and $p_3$ where we choose $p_1=p$ if $p$ divides $|N|$, we see that $G$ has elements of orders $p$, $pp_2$, $pp_3$ in $G-N$ and so $\omega(G) \geq 7$, a contradiction. \hfill $\qed$

\subsection{} $N$ is a $2$-group.

\ 

Assume $N$ is not a $2$-group by contradiction. Then being elementary abelian, $N$ is a $p$-group for some odd prime $p$. Let $R$ be a Sylow $2$-subgroup of $G$. Then by \ref{chm}, $R$ contains an elementary abelian subgroup $A$ of order $4$ (because $G/N$ does), so \cite[5.3.16]{gorenstein} implies $\langle C_N(x)\ :\ x \in A-\{1\} \rangle = N$, in particular there is $x \in A$ such that $xN$ contains elements of order $2p$, so Lemma \ref{bound} implies $6 \geq \omega(G) > \omega(N)+\omega(G/N)-1$ so $\omega(G/N)=4$, therefore $G/N \cong \mbox{A}_5$ by minimality of $G$ as a counterexample. Furthermore, we know that the automorphism orbits of $G$ are $\{1\}$, the non-trivial elements of $N$, the automorphism conjugates of non-trivial elements of $R$, $3$-elements which project to elements of order $3$ in $G/N$, $5$-elements which project to elements of order $5$ in $G/N$ and the aforementioned elements of order $2p$. Suppose that $p \neq 3$. Then there are no elements of order $3p$. Hence, if $x$ is an element of order $3$ then $C_N(x)=\{1\}$. Choose $x$ of order $3$ such that $xN$ normalizes $RN/N$. We have $N_G(RN)/N = N_{G/N}(RN/N) = XQ \cong \mbox{A}_4$ where $X=\langle x \rangle N/N$ and $Q = RN/N$, so since $C_G(N)=N$ \cite[36.2]{aschbacher} provides a contradiction. Therefore $p=3$. Let $T$ be a Sylow $3$-subgroup of $G$. Then, as all the non-trivial elements of $N$ are in the same $\Aut(G)$-orbit, and $N$ is a normal subgroup of $T$, every element of $N$ is centralized by a Sylow $3$-subgroup of $G$ (because the center of $T$ intersects $N$ non-trivially). Writing $|N|=3^n$, since $G$ has $10$ Sylow $3$-subgroups we then have $10(|C_N(T)|-1) \geq 3^n-1$. As $G/N \cong \mbox{A}_5$ can be generated by two Sylow $3$-subgroups, $C_N(T) \cap C_N(T^g) \subseteq Z(G) = \{1\}$ for $T^g \neq T$ so $|C_N(T)| \leq 3^{n/2}$. Hence $10(3^{n/2}-1) \geq 3^n-1$ so $n \leq 4$. Since $C_G(N)=N$ we have $G/N \leq \Aut(N) = \GL(n,\mathbb{F}_3)$ and since $5$ divides $|G/N|$ we deduce at once that $n=4$ and $N$ is a minimal normal subgroup of $G$, giving an irreducible action of $A_5$ on ${\mathbb{F}_3}^4$. There is only one such action, the natural coordinate permutation action on the fully deleted module $N=\{v \in {\mathbb{F}_3}^5\ :\ \sum_{i=1}^5 v_i = 0\}$. But then $\bigcup_{g \in G} C_N(T^g)$ does not equal $N$, since there are vectors of $N$ not centralized by $3$-cycles, for example $(1,2,1,2,0)$. This is a contradiction. \hfill $\qed$

\subsection{} $G/N \cong \mbox{A}_5$ and $G$ has elements of order $1,2$ in $N$ and $2,3,4,5$ outside $N$.

\ 

We know that $G/N \cong \PSL(2,\mathbb{F}_q)$ with $q \in \{4,7,8,9\}$. There are at least two $\Aut(G)$-orbits contained in $N$ and there are at least three $\Aut(G)$-orbits outside $N$ (at least four if $q \neq 4$), of the form $xN$ with $x$ of prime power order. If $q \neq 4$ then $\omega(G)=6$, in particular equality holds in Lemma \ref{bound} so since the cosets of $N$ are of the form $xN$ with $x$ of prime power order, $G$ does not have elements of order $2r$ for any odd prime $r$ dividing $|G|$; let $r=3$ if $q=7$, $r=7$ if $q=8$ and $r=5$ if $q=9$. If $q=4$ then since $\omega(G) \leq 6$, $G$ cannot have elements of order $6$ and elements of order $10$, let $r \in \{3,5\}$ be such that $G$ does not have elements of order $2r$ in this case. Let $R$ be a Sylow $r$-subgroup of $G$ and $H := N_G(R)$. Note that $H \cap N = \{1\}$, indeed $R$ and $H \cap N$ normalize each other and have trivial intersection, so if $H \cap N \neq \{1\}$ then $R(H \cap N)$ contains elements of order $2r$, a contradiction. We deduce $H \cong H/H \cap N \cong HN/N$. Since $(|R|,|N|)=1$, a Frattini-type argument implies $N_G(RN)=HN$, and by the choice of $r$ we have that $N_{G/N}(RN/N)$ is a dihedral group of order $2r$. Therefore $H \cong N_G(RN)/N = N_{G/N}(RN/N)$ is a dihedral group, so since $H \cap N = \{1\}$, there exists an involution $x \in H-N$. Since $x$ does not centralize $N$, there exists $y \in N$ with $xy \neq yx$ hence $xy$ has order $4$. In particular $xN$ contains elements of orders $2$ and $4$ so Lemma \ref{bound} implies $6 \geq \omega(G) > \omega(N)+\omega(G/N)-1$ hence $\omega(G/N)=4$ so $G/N \cong A_5$ by minimality of $G$ as a counterexample. \hfill $\qed$

\ 

Now let $T$ be a Sylow $2$-subgroup of $G$. Then $G=\langle T,T^g \rangle$ for $g \in G-N_G(T)$. Set $|N|=2^n$. Then $C_N(T) \cap C_N(T^g) \subseteq Z(G) = \{1\}$ hence $|C_N(T)| \leq 2^{n/2}$. On the other hand, as all the non-trivial elements of $N$ are $\Aut(G)$-conjugate and $N \unlhd T$, every element of $N$ is centralized by a Sylow $2$-subgroup of $G$ (because the center of $T$ intersects $N$ non-trivially). Since $G$ has five Sylow $2$-subgroups, we have $5(|C_N(T)|-1) \geq 2^n-1$ hence $n \leq 4$. Since $C_G(N)=N$, $G/N \leq \Aut(N) = \GL(n,\mathbb{F}_2)$ so since $5$ divides $|G/N|$ we have $n=4$, $|G|=960$ and $N$ is a minimal normal subgroup of $G$. In particular $G$ is perfect. By \cite{perfect} there are only two perfect groups of order $960$, one is $\ASL(2,\mathbb{F}_4)$, the other has elements of order $6$. This leads to the final contradiction. The proof of Theorem \ref{main} is completed.

\section{Groups without non-trivial abelian normal subgroups} \label{genkohl}

This section gives a generalization of the main result of Kohl's paper \cite{kohl}. As in that paper, we need the classification of the finite simple groups in our proof.

\begin{teor} \label{noabf}
Let $n$ be a natural number. Then there are only finitely many finite groups $G$ without non-trivial abelian normal subgroups such that $\omega(G) \leq n$.
\end{teor}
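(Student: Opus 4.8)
The plan is to bound $|G|$ in terms of $n$ by analyzing the socle. Since $G$ has no non-trivial abelian normal subgroup, every minimal normal subgroup of $G$ is a direct product of isomorphic non-abelian simple groups, so the socle $N = \mathrm{soc}(G)$, which is characteristic in $G$, has the form $N \cong T_1^{m_1} \times \cdots \times T_r^{m_r}$ with $T_1, \ldots, T_r$ pairwise non-isomorphic non-abelian simple groups. I would first record that $C_G(N) = 1$: the subgroup $C_G(N)$ is normal in $G$, and if it were non-trivial it would contain a minimal normal subgroup $M$ of $G$; but then $M \leq N$ and $M \leq C_G(N)$ force $M \leq Z(N) = 1$, a contradiction. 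Hence conjugation embeds $G$ into $\Aut(N)$, and it suffices to bound $|N|$, since then $|G| \leq |\Aut(N)|$ is bounded automatically.

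To bound $|N|$ I would first apply Lemma \ref{bound} to the characteristic subgroup $N$: as $\omega(G/N) \geq 1$ this gives $\omega(N) \leq \omega(G) \leq n$. Next I would use the standard description of the automorphism group of a direct product of non-abelian simple groups: since the isotypic components $T_i^{m_i}$ are characteristic in $N$, one has $\Aut(N) \cong \prod_{i=1}^r \bigl(\Aut(T_i) \wr \mathrm{Sym}(m_i)\bigr)$. Under this action an element of $T_i^{m_i}$ is determined exactly by the multiset of $\Aut(T_i)$-orbits of its $m_i$ coordinates, so that
\[ \omega(N) = \prod_{i=1}^r \binom{\omega(T_i) + m_i - 1}{m_i}. \]

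The inequality $\prod_{i=1}^r \binom{\omega(T_i)+m_i-1}{m_i} \leq n$ then yields all the required finiteness. Each factor is at least $2$, at least $m_i + 1$, and at least $\omega(T_i)$; reading these off separately gives $2^r \leq n$ (bounding the number $r$ of isomorphism types), $m_i + 1 \leq n$ (bounding each multiplicity), and $\omega(T_i) \leq n$. By Kohl's theorem \cite{kohl} only finitely many non-abelian simple groups $T$ satisfy $\omega(T) \leq n$, so the orders $|T_i|$ are bounded in terms of $n$. Combining the bounds on $r$, on the $m_i$, and on the $|T_i|$ bounds $|N| = \prod_{i=1}^r |T_i|^{m_i}$, and therefore $|G| \leq |\Aut(N)|$, in terms of $n$ alone.

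The only input resting on the classification of finite simple groups is Kohl's theorem; the rest is elementary. I expect the step needing the most care to be the identification $\Aut(N) \cong \prod_i \bigl(\Aut(T_i) \wr \mathrm{Sym}(m_i)\bigr)$ together with the multiset description of its orbits on $N$, since it is precisely this that converts the single inequality $\omega(N) \leq n$ into simultaneous bounds on the number of simple factors, their multiplicities, and their isomorphism types.
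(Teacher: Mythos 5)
Your proposal is correct and follows essentially the same route as the paper: both pass to the socle $N$, show $C_G(N)=\{1\}$ to embed $G$ in $\Aut(N)$, decompose $N$ into isotypic components, use the binomial orbit count on $S^m$ (which the paper cites as \cite[3.17]{stroppel} and you derive from $\Aut(S^m) \cong \Aut(S) \wr \mathrm{Sym}(m)$) to bound the number of factors, the multiplicities $m_i$, and each $\omega(T_i)$, and then invoke Kohl's theorem \cite{kohl}. The only cosmetic difference is that you work with the exact product formula for $\omega(N)$, whereas the paper uses the inequalities $\omega(S_i^{m_i}) \leq \omega(N)$ coming from the isotypic components being characteristic; both yield the same conclusion.
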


\begin{proof}
Let $G$ be a finite group without non-trivial abelian normal subgroups, such that $\omega(G) \leq n$, and let $N := \mbox{soc}(G)$, the socle of $G$, that is, the subgroup of $G$ generated by the minimal normal subgroups of $G$. Suppose that the centralizer $C_G(N)$ (which is a normal subgroup of $G$) is non-trivial and let $M$ be a minimal normal subgroup of $G$ contained in $C_G(N)$. By definition of $N$ we have $M \subseteq N$ hence $M$ centralizes itself, implying that $M$ is abelian, contradicting the fact that $G$ does not have non-trivial abelian normal subgroups. This shows that $C_G(N) = \{1\}$, hence the conjugation action gives an injective homomorphism $G \to \Aut(N)$, implying that $|G| \leq |\Aut(N)| \leq |N|^{|N|}$. This means that it is enough to show that there are only a finite number of possibilities for $N$.

\ 

$N$ is a direct product of some minimal normal subgroups, write $N = S_1^{m_1} \times \cdots \times S_t^{m_t}$, where each $S_i$ is a non-abelian simple group and $S_i \not \cong S_j$ if $i \neq j$. Observe that each $S_i^{m_i}$ is characteristic in $N$, hence $t \leq \omega(N)$ and $\omega(S_i^{m_i}) \leq \omega(N)$. Moreover $\omega(S_i^{m_i}) = \binom{m_i+\omega(S_i)-1}{\omega(S_i)-1}$ by \cite[3.17]{stroppel}, so $m_i \leq \omega(S_i^{m_i})$, $\omega(S_i) \leq \omega(S_i^{m_i})$. Since $\omega(N) \leq \omega(G) \leq n$ being $N$ characteristic in $G$, we deduce $m_i \leq n$, $\omega(S_i) \leq n$ for all $i$ and $t \leq n$. By \cite{kohl}, there are only finitely many possibilities for each $S_i$ hence also for $N$.
\end{proof}

\section{Infinitely many non-solvable groups with $\omega(G)=7$} \label{o7}

Let $F$ be a field and $m$ a positive integer. Consider the group $G := G_{m,F} := \SL(2,F) \ltimes V$ where $\SL(2,F)$ (the group of $2 \times 2$ matrices of determinant $1$ with coefficients in $F$) acts on $V := \mbox{M}(2 \times m,F)$ (the $F$-vector space of $2 \times m$ matrices with coefficients in $F$) by multiplication from the left. Observe that $G_{1,\mathbb{F}_4} = \ASL(2,\mathbb{F}_4)$. In the following theorem we compute $\omega(G)$ when $F$ has characteristic $2$.

\begin{teor} \label{inf7}
Let $F$ be a (possibly infinite) field of characteristic $2$ with more than $2$ elements. If $m \geq 2$ then $\omega(G_{m,F}) = 3+\omega(\SL(2,F))$.
\end{teor}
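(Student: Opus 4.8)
The plan is to write $G = H \ltimes V$ with $H = \SL(2,F)$ and $V = \mathrm{M}(2 \times m, F)$, and to count automorphism orbits by separating those contained in $V$ from those meeting $G \setminus V$. First I would check that $V$ is characteristic in $G$: since $|F| > 2$ and $F$ has characteristic $2$, the group $H = \SL(2,F) = \PSL(2,F)$ is non-abelian simple, so $G/V \cong H$ has trivial solvable radical, and $V$ (being abelian and normal) is then exactly the solvable radical of $G$, hence characteristic. Thus $\Aut(G)$ acts on $V$ and on $G/V \cong H$, and Lemma \ref{bound} already gives $\omega(G) \ge \omega(H) + 1$; the content is to improve this by $2$. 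The automorphisms I would exploit are, for $D \in \GL(2,F)$ and $P \in \GL(m,F)$, the maps $(A,v) \mapsto (DAD^{-1}, DvP)$, which one verifies directly are automorphisms of $G$ (the factor $D$ recovers the inner automorphisms when $D \in \SL(2,F)$ and the diagonal automorphisms of $H$ otherwise, while $P$ acts only on $V$ from the right), together with the field automorphisms of $F$ applied entrywise.

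For the orbits inside $V$, I would note that the $G$-conjugacy class of $(I,v)$ equals $\{(I,Av) : A \in \SL(2,F)\}$, and that under the automorphisms $v \mapsto DvP$ the nonzero vectors split only according to $\mathrm{rank}(v) \in \{1,2\}$ (here $m \ge 2$ is what makes rank $2$ occur), since $\GL(2,F) \times \GL(m,F)$ acts transitively on matrices of each fixed rank. This gives at most three orbits: $\{0\}$, the rank-$1$ matrices, and the rank-$2$ matrices. For the matching lower bound I would use that automorphisms preserve conjugacy-class sizes and compute $|C_G((I,v))| = |C_H(v)|\,|V|$, where $C_H(v) = \{A \in \SL(2,F) : Av = v\}$ has order $|F|$ if $\mathrm{rank}(v) = 1$ and order $1$ if $\mathrm{rank}(v) = 2$; as $|F| > 1$ these classes cannot fuse, so there are exactly three orbits in $V$. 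This is the step where an intrinsic invariant is essential, because an automorphism of $G$ need not act $F$-linearly on $V$, so rank is not a priori preserved.

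For the orbits meeting $G \setminus V$, I would first observe that the image of the map $\Aut(G) \to \Aut(G/V) = \Aut(\SL(2,F))$ is all of $\Aut(\SL(2,F))$ (inner and diagonal parts come from the automorphisms above, field parts from the field automorphisms), so orbits outside $V$ project onto the $\omega(H) - 1$ nontrivial $\Aut(H)$-orbits of $H$. The key computation is that conjugation by $(I,Y)$ sends $(s,v) \mapsto (s, v + (I+s)Y)$, so the coset $sV$ is translated by $\mathrm{im}(I+s)$. In characteristic $2$, $I+s$ is invertible exactly when $\mathrm{tr}(s) \neq 0$, i.e. exactly when $s$ is not unipotent, and for such $s$ the whole coset $sV$ is a single conjugacy class, hence a single orbit. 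The nontrivial unipotent elements form a single $\Aut(\SL(2,F))$-orbit consisting of involutions (in characteristic $2$ every nontrivial unipotent element of $\SL(2,F)$ has order $2$); for such an involution $t$, the computation $(t,v)^2 = (I,(I+t)v)$ shows that $(t,v)$ has order $2$ when $v \in \ker(I+t)$ and order $4$ otherwise. Translations by $\mathrm{im}(I+t) = \ker(I+t)$ fuse all order-$2$ elements into one orbit, while translations together with the maps $v \mapsto vP$ fuse all order-$4$ elements into one orbit. Hence the involution coset contributes exactly $2$ orbits and every other nontrivial coset contributes exactly $1$, giving $(\omega(H) - 2) + 2 = \omega(H)$ orbits outside $V$ and the total $3 + \omega(\SL(2,F))$.

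The main obstacle I anticipate is the analysis of the involution coset together with securing the exact (not merely asymptotic) count: one must produce enough explicit automorphisms to force the claimed fusions (the $\GL(2,F) \times \GL(m,F)$-action and the translations by $\mathrm{im}(I+s)$) while simultaneously exhibiting invariants — element order, and conjugacy-class size for the rank-$1$ versus rank-$2$ distinction — that rule out any further fusion, so that the upper and lower bounds meet. Verifying that $I+s$ is invertible precisely off the unipotent elements, and that $m \ge 2$ is exactly what makes rank $2$ occur (distinguishing this case from $\ASL(2,\mathbb{F}_4)$, where $m=1$), are the two places where the hypotheses are genuinely used.
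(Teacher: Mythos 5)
Your proposal follows essentially the same route as the paper's proof: $V$ is identified as a characteristic subgroup, the automorphisms exploited are conjugation by block matrices (the $\GL(2,F)\times\GL(m,F)$-action), translations, and entrywise field automorphisms, Hua's theorem on $\Aut(\SL(2,F))$ gives the upper bound outside $V$, and the coset analysis (invertibility of $I+s$ precisely off the unipotent elements, the order-$2$/order-$4$ split over the involution coset, the rank normal form inside $V$) coincides step by step with the paper's computations. All of those steps are correct.

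There is, however, one genuine gap, and it sits exactly at the step you yourself flagged as needing an ``intrinsic invariant'': your lower bound of three orbits inside $V$. You separate the rank-$1$ class from the rank-$2$ class by comparing centralizer orders, $|C_G((I,v))| = |C_H(v)|\,|V|$ with $|C_H(v)|$ equal to $|F|$ or $1$. That argument is valid only for finite $F$, but the theorem is stated for possibly infinite $F$; when $F$ is infinite the two cardinals $|F|\cdot|F|^{2m}$ and $|F|^{2m}$ coincide (and the two conjugacy classes likewise both have cardinality $|F|$), so the numerical invariant collapses and nothing in your argument prevents an automorphism of $G$ from fusing the two classes. The paper's remedy is to read your own centralizer computation structurally rather than numerically: if $v$ has rank $2$ then $C_G((I,v)) = V$ is abelian, whereas if $v$ has rank $1$ then $C_G((I,v))$ contains an element with unipotent part $J$ together with an element of $V$ that fail to commute, so it is non-abelian; since an automorphism carries centralizers to centralizers preserving their isomorphism type (not merely their cardinality), the two classes cannot fuse over any field. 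With that single substitution your proof is complete; note that for the paper's application (the groups $G_{m,\mathbb{F}_4}$ realizing $\omega = 7$) your finite-field argument already suffices as written.
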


\begin{proof}
We identify $G$ with $\left\{ \left( \begin{array}{cc} X & Y \\ 0 & 1 \end{array} \right)\ :\ X \in \SL(2,F),\ Y \in \mbox{M}(2 \times m,F) \right \}$. \\ $G$ is a normal subgroup of $$\Gamma := \left\{ \left( \begin{array}{cc} A & B \\ 0 & C \end{array} \right)\ :\ A \in \GL(2,F),\ B \in \mbox{M}(2 \times m,F),\ C \in \GL(m,F) \right\}.$$Observe that $\Gamma$ acts on $G$ by conjugation hence every automorphism orbit of $G$ is a union of $\Gamma$-orbits. In what follows we make use of the identity $$\left( \begin{array}{cc} A & B \\ 0 & C \end{array} \right)^{-1} \left( \begin{array}{cc} X & Y \\ 0 & 1 \end{array} \right) \left( \begin{array}{cc} A & B \\ 0 & C \end{array} \right) = \left( \begin{array}{cc} A^{-1}XA & A^{-1}(X-1)B+A^{-1}YC \\ 0 & 1 \end{array} \right).$$

We identify $V$ with $M = \left\{ \left( \begin{array}{cc} 1 & Y \\ 0 & 1 \end{array} \right)\ :\ Y \in \mbox{M}(2 \times m,F) \right \}$. Observe that $M$ is the unique maximal abelian normal subgroup of $G$. Indeed, if $K$ is any abelian normal subgroup then $KM/M \cong K/K \cap M$ is an abelian normal subgroup of $G/M \cong \SL(2,F)$ which is simple since $|F| \geq 4$ (note that $\SL(2,F) \cong \PSL(2,F)$ being $F$ of characteristic $2$). Thus $KM/M$ is trivial, in other words $K \leq M$. In particular $M$ is characteristic in $G$.

\ 

We claim that $M$ contains at least three $\Aut(G)$-orbits. For $v,w \in \mbox{M}(1 \times m,F)$ set $T_{v,w} := \left( \begin{array}{c} v \\ w \end{array} \right)$ and $g_{v,w} := \left( \begin{array}{cc} 1 & T_{v,w} \\ 0 & 1 \end{array} \right) \in M$. Observe that $\alpha = \left( \begin{array}{cc} X & Y \\ 0 & 1 \end{array} \right) \in G$ centralizes $g_{v,w}$ if and only if $(X-1)T_{v,w}=0$. Suppose $v$ and $w$ are linearly independent. Then we easily deduce $C_G(g_{v,w})=M$. Let $J := \left( \begin{array}{cc} 1 & 1 \\ 0 & 1 \end{array} \right)$. Both $g_{v,w}$ and $h = \left( \begin{array}{cc} J & T_{v,0} \\ 0 & 1 \end{array} \right)$ commute with $g_{v,0}$, but $g_{v,w}h \neq hg_{v,w}$. This implies that $C_G(g_{v,0})$ is not abelian. Hence $1$, $g_{v,w}$, $g_{v,0}$ have non-isomorphic centralizers, in particular they lie in different $\Aut(G)$-orbits.

\ 

We claim that $M$ contains exactly three $\Aut(G)$-orbits. By the above, it is enough to prove that $M$ contains at most three $\Aut(G)$-orbits. Let $\left( \begin{array}{cc} 1 & Y \\ 0 & 1 \end{array} \right) \in M$, $e_1 = (1,0,\ldots,0)$ and $e_2 = (0,1,0,\ldots,0)$ in $\mbox{M}(1 \times m,F)$. There exists $C \in \GL(m,F)$ such that $YC = \left( \begin{array}{c} e_1 \\ 0 \end{array} \right)$ or $YC = \left( \begin{array}{c} e_1 \\ e_2 \end{array} \right)$ according to whether $Y$ has rank $1$ or $2$. Hence $M$ contains at most three $\Gamma$-orbits, so $M$ contains at most three $\Aut(G)$-orbits.

\ 

We claim that any non-trivial element of $\SL(2,F)$ of order a power of $2$ has order $2$, and the elements of order $2$ in $\SL(2,F)$ are all conjugated to $J := \left( \begin{array}{cc} 1 & 1 \\ 0 & 1 \end{array} \right)$ in $\GL(2,F)$. Consider $X \in \SL(2,F)$ of order $2^s$ with $s \geq 1$. Observe that $X^{2^s}=1$ implies that the eigenvalues $\lambda$ of $X$ verify $\lambda^{2^s}=1$ so being $F$ of characteristic $2$, $(\lambda-1)^{2^s}=0$ thus $\lambda = 1$. Since $s \geq 1$, $X$ is conjugated to $J$ in $\GL(2,F)$. In particular $s=1$.

\ 

We claim that $\omega(G) \geq 3+\omega(G/M)$. $M$ is an elementary abelian $2$-group, and $\left( \begin{array}{cc} J & \left( \begin{array}{c} 0 \\ e_1 \end{array} \right) \\ 0 & 1 \end{array} \right)^2 = \left( \begin{array}{cc} 1 & \left( \begin{array}{c} e_1 \\ 0 \end{array} \right) \\ 0 & 1 \end{array} \right)$ has order $2$. Thus there are elements of order $4$ in $G$, and by the discussion above, in $M$ there are two $\Aut(G)$-orbits of elements of order $2$. Since $\SL(2,F)$ has elements of order $2$ and no elements of order $4$, by adapting Lemma \ref{bound} we obtain $\omega(G) \geq 3+\omega(G/M)$.

\ 

Each involution $g = \left( \begin{array}{cc} X & Y \\ 0 & 1 \end{array} \right) \in G-M$ is conjugated to $\left( \begin{array}{cc} J & 0 \\ 0 & 1 \end{array} \right)$ in $\Gamma$. Indeed, since $X$ is conjugated to $J$ in $\GL(2,F)$, we may assume $X=J$. Then $g^2=1$ implies $Y = \left( \begin{array}{c} v \\ 0 \end{array} \right)$ with $v \in \mbox{M}(1 \times m,F)$. Setting $Z := \left( \begin{array}{c} 0 \\ v \end{array} \right)$ we obtain $\left( \begin{array}{cc} 1 & Z \\ 0 & 1 \end{array} \right)^{-1} \left( \begin{array}{cc} J & Y \\ 0 & 1 \end{array} \right) \left( \begin{array}{cc} 1 & Z \\ 0 & 1 \end{array} \right) = \left( \begin{array}{cc} J & 0 \\ 0 & 1 \end{array} \right)$.

\ 

We claim that each element $g = \left( \begin{array}{cc} X & Y \\ 0 & 1 \end{array} \right)$ of order $4$ is conjugated to the element $\left( \begin{array}{cc} J & \left( \begin{array}{c} 0 \\ e_1 \end{array} \right) \\ 0 & 1 \end{array} \right)$ in $\Gamma$. Again, we may assume $X=J$, but $Y = \left( \begin{array}{c} v \\ w \end{array} \right)$ with $w \neq 0$ now. Using $C \in \GL(m,F)$ with $wC=e_1$ we see that $g$ is conjugated to $\left( \begin{array}{cc} J & Y_1 \\ 0 & 1 \end{array} \right)$ with $Y_1 = \left( \begin{array}{c} x \\ e_1 \end{array} \right)$ for some $x \in \mbox{M}(1 \times m,F)$. Now $Z := \left( \begin{array}{c} 0 \\ x \end{array} \right)$ yields $\left( \begin{array}{cc} 1 & Z \\ 0 & 1 \end{array} \right)^{-1} \left( \begin{array}{cc} J & Y_1 \\ 0 & 1 \end{array} \right) \left( \begin{array}{cc} 1 & Z \\ 0 & 1 \end{array} \right) = \left( \begin{array}{cc} J & \left( \begin{array}{c} 0 \\ e_1 \end{array} \right) \\ 0 & 1 \end{array} \right).$

\ 

We claim that if $g = \left( \begin{array}{cc} X & Y \\ 0 & 1 \end{array} \right) \in G$ with $X^2 \neq 1$ (in other words, the order of $g$ is not $1$, $2$ or $4$) then $g$ is conjugated to $\left( \begin{array}{cc} X & 0 \\ 0 & 1 \end{array} \right)$ in $G$. To prove this observe that $X^2 \neq 1$ implies that $X-1$ is invertible, because if not then $\ker(X-1)$ is non-trivial, hence $X-1$ is conjugated to a matrix of the form $\left( \begin{array}{cc} 0 & \alpha \\ 0 & \beta \end{array} \right)$, thus $X$ is conjugated to $\left( \begin{array}{cc} 1 & \alpha \\ 0 & 1+\beta \end{array} \right)$, and $\det(X)=1$ implies $\beta=0$ hence $X^2=1$, a contradiction. Set $Z := (1-X)^{-1}Y$. Then $\left( \begin{array}{cc} 1 & Z \\ 0 & 1 \end{array} \right)^{-1} \left( \begin{array}{cc} X & Y \\ 0 & 1 \end{array} \right) \left( \begin{array}{cc} 1 & Z \\ 0 & 1 \end{array} \right) = \left( \begin{array}{cc} X & 0 \\ 0 & 1 \end{array} \right)$.

\ 

Now being $F$ a field of characteristic $2$, the automorphism group of $\SL(2,F)$ is generated by the automorphisms induced by conjugation in $\GL(2,F)$ and the field automorphisms (see for example \cite[Page 756]{hua}). Observe that using conjugation in $\Gamma$ and field automorphism action on each matrix entry of elements of $G$ we see that the set of elements listed so far includes a set of representatives of the automorphism orbits of $G$. Therefore $\omega(G) \leq 3+\omega(\SL(2,F))$ and the result follows.
\end{proof}
Since $\omega(\SL(2,\mathbb{F}_4))=4$ we obtain that $\omega(G_{m,\mathbb{F}_4}) = 7$ for all $m \geq 2$. In particular there are infinitely many finite non-solvable groups $G$ with $\omega(G)=7$.

\section{Acknowledgements}
We are really grateful to the referee and the editors for very carefully reading a previous version of this paper and for the insightful suggestions that contributed greatly to improve and simplify the proofs of Theorems \ref{main}, \ref{noabf}, \ref{inf7}.

\end{document}